 \newtheorem{thm}{Theorem}[section]
 \newtheorem{prop}[thm]{Proposition}
 \theoremstyle{definition}
 \theoremstyle{remark}
 \newtheorem{rem}[thm]{Remark}
 \numberwithin{equation}{section}
\title[The diameter of the intersection graph of a finite simple group]{The intersection graph of a finite\\simple group has diameter at most 5}
\author{Saul D. Freedman}
\date{\today}
\address{ %
School of Mathematics and Statistics\\ University of St Andrews\\ St Andrews\\ KY16 9SS, UK}
\email{sdf8@st-andrews.ac.uk}
\newcommand\ig{\Delta_G}
\newcommand\diam{\mathrm{diam}}
\newcommand\sd{\mkern1.5mu{:}\mkern1.5mu}
\newcommand{\nonsplit}[2]{#1\raisebox{0.6ex}{$\cdot$} #2}
\renewcommand{\le}{\leqslant}
\renewcommand{\ge}{\geqslant}
\begin{document}

\onehalfspacing

\begin{abstract}
Let $G$ be a non-abelian finite simple group. In addition, let $\ig$ be the intersection graph of $G$, whose vertices are the proper nontrivial subgroups of $G$, with distinct subgroups joined by an edge if and only if they intersect nontrivially. We prove that the diameter of $\ig$ has a tight upper bound of $5$, thereby resolving a question posed by Shen (2010). Furthermore, a diameter of $5$ is achieved only by the baby monster group and certain unitary groups of odd prime dimension.
\end{abstract}

\thanks{The author was supported by a St Leonard's International Doctoral Fees Scholarship and a School of Mathematics \& Statistics PhD Funding Scholarship at the University of St Andrews.}
\subjclass{Primary 05C25; Secondary 20E32}
\keywords{Intersection graph, simple group, subgroups}

\maketitle

\section{Introduction}
\label{sec:intro}

For a finite group $G$, let $\ig$ be the \emph{intersection graph} of $G$. This is the graph whose vertices are the proper nontrivial subgroups of $G$, with two distinct vertices $S_1$ and $S_2$ joined by an edge if and only if $S_1 \cap S_2 \ne 1$. We write $d(S_1,S_2)$ to denote the distance in $\ig$ between vertices $S_1$ and $S_2$, and if these vertices are joined by an edge, then we write $S_1 \sim S_2$. Additionally, $\diam(\ig)$ denotes the diameter of $\ig$.

Cs\'{a}k\'{a}ny and Poll\'{a}k \cite{csakany} introduced the graph $\ig$ in 1969, as an analogue of the intersection graph of a semigroup defined by Bos\'{a}k \cite{bosak} in 1964. For finite non-simple groups $G$, Cs\'{a}k\'{a}ny and Poll\'{a}k determined the cases where $\ig$ is connected, and proved that, in these cases, $\diam(\ig) \le  4$ (see also \cite[Lemma 5]{shen}). It is not known if there exists a finite non-simple group $G$ with $\diam(\ig) = 4$.

Suppose now that $G$ is a non-abelian finite simple group. In 2010, Shen \cite{shen} proved that $\ig$ is connected, and asked two questions: does $\diam(\ig)$ have an upper bound? If yes, does the upper bound of $4$ from the non-simple case also apply here? In the same year, Herzog, Longobardi and Maj \cite{herzog} independently showed that the subgraph of $\ig$ induced by the \emph{maximal} subgroups of $G$ is connected with diameter at most $62$. As each proper nontrivial subgroup of $G$ is adjacent in $\ig$ to some maximal subgroup, this implies an upper bound of $64$ for $\diam(\ig)$, resolving Shen's first question. Ma \cite{ma} reduced this upper bound to 28 in 2016. In the other direction, Shahsavari and Khosravi \cite[Theorem 3.7]{shahsavari} proved in 2017 that $\diam(\ig) \ge 3$. 

In this paper, we significantly reduce the previously known upper bound of $28$ for $\diam(\ig)$, and show that the new bound is best possible. In particular, we prove the following theorem, which resolves Shen's second question with a negative answer.

\begin{thm}
\label{thm:diamthm}
Let $G$ be a non-abelian finite simple group.
\begin{enumerate}[label={(\roman*)},font=\upshape]
\item $\ig$ is connected with diameter at most $5$.
\item \label{diamthm2} If $G$ is the baby monster group $\mathbb{B}$, then $\diam(\ig) = 5$. 
\item If $\diam(\ig) = 5$ and $G \not\cong \mathbb{B}$, then $G$ is a unitary group $\mathrm{U}_n(q)$, with $n$ an odd prime and $q$ a prime power.
\end{enumerate}
\end{thm}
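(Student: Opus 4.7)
My plan is to invoke the Classification of Finite Simple Groups and handle each family of $G$ (alternating groups, the $26$ sporadic groups, and groups of Lie type) separately, using a common set of reductions. The central technique is a \emph{Sylow hub} argument: for any prime $p$ dividing $|G|$ and any proper nontrivial subgroup $H$ with $p\mid|H|$, Cauchy's theorem yields an element of order $p$ in $H$, which lies in some Sylow $p$-subgroup $P$ of $G$; hence either $H=P$ or $H\sim P$ in $\ig$. Consequently, for any two vertices $H_1,H_2$ and any choice of primes $p_i\mid|H_i|$, we have $d(H_1,H_2)\le 2+d(P_{p_1},P_{p_2})$ where each $P_{p_i}$ is a Sylow $p_i$-subgroup. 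Thus the upper bound question reduces to controlling distances between Sylow subgroups across the primes dividing $|G|$.

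If $p,q$ are joined in the Gruenberg--Kegel prime graph of $G$ (that is, $G$ has an element of order $pq$), then a cyclic group $\langle g\rangle$ of order $pq$ gives $P_p\sim\langle g\rangle\sim P_q$, yielding $d(H_1,H_2)\le 4$. So distance $5$ can only arise between subgroups whose orders involve primes that are non-adjacent in the prime graph. The prime graph of a finite simple group is severely restricted (at most six connected components, by Williams and Kondrat'ev), so combining this structural input with the Sylow hub bound and suitable bridging subgroups---normalizers of maximal tori, parabolic subgroups, or overgroups of long cyclic subgroups---should yield $\diam(\ig)\le 5$ uniformly.

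For exactness, part (ii) is essentially a finite verification using the \textsc{Atlas}: I would identify two subgroups of $\mathbb{B}$ (one typically a cyclic subgroup of order a Zsigmondy prime isolated in the prime graph) whose minimum connecting path has length exactly $5$, and confirm via the maximal subgroup list that no shorter path exists. For part (iii), I would show that among Lie type groups, only $\mathrm{U}_n(q)$ with $n$ an odd prime can achieve diameter $5$: the primitive prime divisors of $q^{2n}-1$ furnish cyclic subgroups whose only neighbours in $\ig$ are subgroups of order divisible by that prime, and the arithmetic of $|\mathrm{U}_n(q)|$ restricts those neighbours sharply when $n$ is prime. The main obstacle is the uniform handling of the Lie type case: it demands a systematic treatment of maximal subgroups via Aschbacher's theorem for classical groups and the Liebeck--Seitz--Testerman analysis for exceptional groups, careful Zsigmondy bookkeeping, and attention to numerous small-rank and small-$q$ exceptions. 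The sporadic part is elementary in principle but requires individually verifying $26$ groups to pinpoint $\mathbb{B}$ as the unique sporadic extremal example, and showing that no alternating group attains diameter $5$.
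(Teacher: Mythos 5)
Your central reduction has a genuine gap. The ``Sylow hub'' step only gives $H_i \sim P_{p_i}$ for the \emph{particular} Sylow $p_i$-subgroup containing the chosen element of order $p_i$ in $H_i$, and an element $g$ of order $pq$ only gives $P' \sim \langle g\rangle \sim Q'$ for the \emph{particular} Sylow subgroups $P'$, $Q'$ containing the $p$-part and $q$-part of $g$. Distinct Sylow $p$-subgroups of a simple group are conjugate but need not be adjacent in $\ig$ (e.g.\ two Sylow $p$-subgroups of $\mathrm{L}_2(p)$ intersect trivially), so you cannot splice these paths together: the estimate $d(H_1,H_2)\le 2+d(P_{p_1},P_{p_2})$ followed by ``$d(P_p,P_q)\le 2$ when $pq$ is an element order'' silently identifies non-equal conjugate Sylow subgroups, and bounding $d(P,P^h)$ for conjugate Sylow subgroups is essentially the original problem again. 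The reason the analogous argument works in the paper for the prime $2$ is special to involutions: two involutions $x\in M_1$, $y\in M_2$ generate a dihedral group, which is automatically a \emph{proper} subgroup of the non-abelian simple group $G$ and contains both $x$ and $y$, giving $M_1\sim\langle x,y\rangle\sim M_2$ directly. Two elements of odd prime order have no such property (they may generate $G$), and your proposal has no substitute for it.

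This one observation is also what collapses the case analysis, and missing it is why your plan balloons into ``a systematic treatment of maximal subgroups via Aschbacher's theorem'': if every maximal subgroup of $G$ has even order, the dihedral trick already gives $\diam(\ig)\le 4$, so by the Liebeck--Saxl classification of maximal subgroups of odd order only $A_p$, $\mathrm{L}_2(q)$, $\mathrm{L}_n(q)$ and $\mathrm{U}_n(q)$ ($n$ an odd prime), $\mathrm{M}_{23}$, $\mathrm{Th}$, $\mathbb{B}$ and $\mathbb{M}$ need individual treatment --- not all of CFSG, and not all $26$ sporadic groups. Finally, part (ii) is not ``essentially a finite verification'': one cannot enumerate paths in $\Delta_{\mathbb{B}}$. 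The paper instead fixes $S\le M_1 = 47\sd 23$ of order $47$, notes $M_1$ is the \emph{unique} maximal overgroup of $S$, and uses a counting argument on the subgroups of order $23$ linking $M_1$ to other maximal subgroups to produce a conjugate $M_1^g$ with $d(M_1,M_1^g)>2$, whence $d(S,S^g)>4$. Your sketch contains neither the uniqueness observation nor the counting step, and without them there is no route to the lower bound.
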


\begin{rem}
Using information from the \textsc{Atlas} \cite{ATLAS}, we can show that if $S_1$ and $S_2$ are vertices of $\Delta_{\mathbb{B}}$ with $d(S_1,S_2) = 5$, then $|S_1| = |S_2| = 47$.
\end{rem}

\begin{rem}
If $G \in \{\mathrm{U}_3(3), \mathrm{U}_3(5),\mathrm{U}_5(2)\}$, then $G$ has no maximal subgroup of odd order \cite[Theorem 2]{liebecksaxl}. As we will explain in the proof of Theorem \ref{thm:diamthm}, this implies that $\diam(\ig) \le 4$. Indeed, we can use information from the \textsc{Atlas} \cite{ATLAS} to show that $\diam(\Delta_{\mathrm{U}_3(3)}) = 3$. Furthermore, even though $\mathrm{U}_3(7)$ has a maximal subgroup of odd order, we deduce from calculations in Magma \cite{magma} that $\diam(\Delta_{\mathrm{U}_3(7)}) = 4$. On the other hand, we can adapt the proof of Theorem \ref{thm:diamthm}\ref{diamthm2}, with the aid of several Magma calculations, to show that $\diam(\Delta_{\mathrm{U}_7(2)}) = 5$.
\end{rem}

It is an open problem to classify the finite simple unitary groups $G$ with $\diam(\ig) = 5$.

\section{Proof of Theorem \ref{thm:diamthm}}
\label{sec:upperbound}

In order to prove Theorem \ref{thm:diamthm} in the unitary case, we will require the following proposition. For a prime power $q$, let $f$ be the unitary form on the vector space $V:=\mathbb{F}_{\!q^2}^3$ whose Gram matrix is the $3 \times 3$ identity matrix, and let $\mathrm{SU}_3(q)$ be the associated special unitary group. Then the standard basis for $(V,f)$ is orthonormal, and a matrix $A \in \mathrm{SL}_3(q^2)$ lies in $\mathrm{SU}_3(q)$ if and only if $A^{-1} = A^{\sigma \mathsf{T}}$, where $\sigma$ is the field automorphism $\alpha \mapsto \alpha^q$ of $\mathbb{F}_{\!q^2}$. For a subspace $U$ of $V$, we will write $\mathrm{SU}_3(q)_U$ to denote the stabiliser of $U$ in $\mathrm{SU}_3(q)$.

\begin{prop}
\label{prop:interst}
Let $q$ be a prime power greater than $2$, and let $X$ and $Y$ be one-dimensional subspaces of the unitary space $(V,f)$, with $X$ non-degenerate. Then $\mathrm{SU}_3(q)_X \cap \mathrm{SU}_3(q)_Y$ contains a non-scalar matrix.
\end{prop}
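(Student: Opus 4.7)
Fix an orthonormal basis of $(V, f)$ with $e_1$ spanning $X$; then $X^\perp = \langle e_2, e_3 \rangle$ is a non-degenerate two-dimensional unitary space, and $\mathrm{SU}_3(q)_X$ stabilises the orthogonal decomposition $V = X \oplus X^\perp$. Write $Y = \langle \alpha e_1 + w \rangle$ with $\alpha \in \mathbb{F}_{q^2}$ and $w \in X^\perp$. The plan is to exhibit an explicit non-scalar element of $\mathrm{SU}_3(q)_X \cap \mathrm{SU}_3(q)_Y$ in each of a handful of cases, split according to the relative position of $Y$.

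The boundary cases $Y = X$ (when $w = 0$) and $Y \subseteq X^\perp$ (when $\alpha = 0$) are the easy ones. In the first, $\mathrm{SU}_3(q)_X$ is isomorphic to $\mathrm{GU}_2(q)$ and so is vastly larger than its centre. In the second, I would search inside the pointwise stabiliser of $X$ in $\mathrm{SU}_3(q)$, which acts on $X^\perp$ and is isomorphic to $\mathrm{SU}_2(q)$; the stabiliser in $\mathrm{SU}_2(q)$ of any one-dimensional subspace of $X^\perp$ has order $q+1$ or $q(q-1)$ depending on whether the subspace is non-degenerate or isotropic, both exceeding $1$ when $q > 2$.

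The main case is $\alpha \ne 0$ and $w \ne 0$. Any $g \in \mathrm{SU}_3(q)_X$ sends $e_1$ to $\lambda e_1$ with $\lambda^{q+1}=1$ and acts on $X^\perp$ as some $A \in \mathrm{GU}_2(q)$ with $\det A = \lambda^{-1}$; the extra requirement $g(Y) = Y$ becomes $A(w) = \lambda w$. If $w$ is non-isotropic, then $X^\perp$ decomposes orthogonally as $\langle w \rangle \oplus \langle w \rangle^\perp$, which forces $A = \mathrm{diag}(\lambda, \lambda^{-2})$ in the basis $(w, w')$ with $\langle w' \rangle = \langle w \rangle^\perp$; this $A$ is automatically unitary, and the resulting $g$ is non-scalar exactly when $\lambda^3 \ne 1$. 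The number of $(q+1)$-th roots of unity $\lambda$ with $\lambda^3 \ne 1$ is $(q+1) - \gcd(3, q+1)$, which is positive precisely when $q > 2$. If instead $w$ is isotropic, then $X^\perp$ is a hyperbolic plane, and I would take $\lambda = 1$ together with a non-trivial unipotent $A$ fixing $w$ pointwise; in a hyperbolic basis $(w, w')$ of $X^\perp$, writing $A(w') = b w + w'$, the unitary condition reduces to $b + b^q = 0$, which cuts out the kernel of the trace map $\mathbb{F}_{q^2} \to \mathbb{F}_q$ and hence admits $q - 1 \ge 2$ non-zero solutions.

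The principal obstacle is the non-isotropic subcase above: the stabiliser of $\langle w \rangle$ in $\mathrm{GU}_2(q)$ is a rigid maximal torus, so the entire search collapses to the single parameter $\lambda$, and the hypothesis $q > 2$ is precisely what makes the count $(q+1) - \gcd(3, q+1)$ of permissible $\lambda$ strictly positive. I expect the bulk of the careful verification to lie in confirming unitarity and non-scalarity of the constructed $g$ in each subcase, using only that $\mathbb{F}_{q^2}^{\times}$ contains $(q+1)$-th roots of unity avoiding the cube roots of $1$ when $q > 2$.
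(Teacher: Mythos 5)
Your proposal is correct and, despite its more geometric packaging, constructs essentially the same elements as the paper: the paper's case distinction $\mu^{q+1}=-1$ versus $\mu^{q+1}\neq -1$ is exactly your isotropic/anisotropic dichotomy for $w$, its second displayed matrix is your $\mathrm{diag}(\lambda,\lambda,\lambda^{-2})$ written in the standard basis, its first is your unipotent element, and $q>2$ enters both arguments only to guarantee a $(q+1)$-th root of unity that is not a cube root of unity. The paper simply works in explicit coordinates throughout, folding your boundary cases into the observation that a suitable permutation of the diagonal matrix with entries $\lambda,\lambda,\lambda^{-2}$ handles any $Y$ spanned by a vector with a zero coordinate.
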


\begin{proof}
We may assume without loss of generality that $X$ contains the vector $(1,0,0)$. Let $(a,b,c)$ be a nonzero vector of $Y$. In addition, let $\omega$ be a primitive element of $\mathbb{F}_{\!q^2}$, and let $\lambda:=\omega^{q-1}$. Then $|\lambda| = q+1 > 3$. If at least one of $a$, $b$ and $c$ is equal to $0$, then $\mathrm{SU}_3(q)_X \cap \mathrm{SU}_3(q)_Y$ contains a non-scalar diagonal matrix with two diagonal entries equal to $\lambda$ and one equal to $\lambda^{-2}$ (not necessarily in that order).

Suppose now that $a$, $b$ and $c$ are all nonzero, and let $\mu:=b^{-1}c$. We may assume that $a = 1$. The trace map ${\alpha \mapsto \alpha + \alpha^q}$ from $\mathbb{F}_{\!q^2}$ to $\mathbb{F}_{\!q}$ is $\mathbb{F}_{\!q}$-linear, and hence has a nontrivial kernel. In particular, there exists $\beta \in \mathbb{F}_{\!q^2}$ such that $\beta \ne 1$ and $\beta + \beta^q = 2$. It follows from simple calculations that if $\mu^{q+1} = -1$, then $\mathrm{SU}_3(q)_X \cap \mathrm{SU}_3(q)_Y$ contains
\begin{align*}
\left( \begin{matrix}
1&0&0\\
0&\beta&\mu(1-\beta^q)\\
0&\mu^{-1}(1-\beta)&\beta^q
\end{matrix} \right).
\end{align*}
If instead $\mu^{q+1} \ne -1$, then we can define $\gamma:=\lambda^{-2}(\lambda^3+\mu^{q+1})(1+\mu^{q+1})^{-1}$. In this case, $\mathrm{SU}_3(q)_X \cap \mathrm{SU}_3(q)_Y$ contains
\begin{align*}
\left( \begin{matrix}
\lambda&0&0\\
0&\gamma&\mu(\lambda-(\gamma \lambda)^q)\\
0&\mu^{-1}(\lambda-\gamma)&(\lambda \gamma)^q
\end{matrix} \right).
\end{align*}
Note that $\lambda \ne \gamma$, since $|\lambda| > 3$.
\end{proof}

\begin{proof}[Proof of Theorem \ref{thm:diamthm}]
Let $S_1$ and $S_2$ be proper nontrivial subgroups of $G$, and let $M_1$ and $M_2$ be maximal subgroups of $G$ that contain $S_1$ and $S_2$, respectively. Since $d(M_1,M_2) \le d(S_1,M_2) \le d(S_1,S_2)$, we may assume that $S_1$ and $S_2$ are not maximal in $G$. We may also assume that $M_1 \ne M_2$, as otherwise $S_1 \sim M_1 \sim S_2$ and $d(S_1,S_2) \le 2$.

Suppose first that $|M_1|$ and $|M_2|$ are even. Then, as observed in the proof of \cite[Proposition 3.1]{herzog}, there exist involutions $x \in M_1$ and $y \in M_2$, with $\langle x,y \rangle$ equal to a (proper) dihedral subgroup $D$ of $G$ (with $|D| = 2$ allowed). Hence $S_1 \sim M_1 \sim D \sim M_2 \sim S_2$, and so $d(S_1,S_2) \le 4$. In particular, if every maximal subgroup of $G$ has even order, then $\diam(\ig) \le 4$, as noted in the proof of \cite[Lemma 2.3]{ma}.

It remains to consider the case where $G$ contains a maximal subgroup of odd order. Liebeck and Saxl \cite[Theorem 2]{liebecksaxl} present a list containing all possibilities for $G$ and its maximal subgroups of odd order. By the previous paragraph, we may assume that the maximal subgroup $M_1$ has odd order. However, $|M_2|$ may be even. In what follows, information about the sporadic simple groups is taken from the \textsc{Atlas} \cite{ATLAS}, except where specified otherwise. 


\medskip

\noindent (i) $G = A_p$, with $p$ prime, $p \equiv 3 \pmod 4$, and $p \notin \{7,11,23\}$. By \cite[Theorem 2]{csakany} (see also \cite[Assertion I]{shen}), the intersection graph of any simple alternating group has diameter at most $4$.

\medskip

\noindent (ii) $G = \mathrm{L}_2(q)$, with $q$ a prime power and $q \equiv 3 \pmod 4$. 
The group $G$ acts transitively on the set $\Omega$ of one-dimensional subspaces of the vector space $\mathbb{F}_{\!q}^2$. Additionally, $M_1 = G_U$ for some $U \in \Omega$, and $G_U \cap G_W \ne 1$ for each $W \in \Omega$. If $|M_2|$ is odd, then $M_2 = G_W$ for some $W$, and it follows that $M_1 \sim M_2$ and $d(S_1,S_2) \le 3$. We may therefore assume that $M_2$ contains an involution $g$. Then $g$ fixes no subspace in $\Omega$, and so $g \in G_{\{U,X\}}$ for some $X \in \Omega \setminus \{U\}$. Since the nontrivial subgroup $G_U \cap G_X$ lies in both $M_1 = G_U$ and $G_{\{U,X\}}$, we deduce that $S_1 \sim M_1 \sim G_{\{U,X\}} \sim M_2 \sim S_2$. Thus $d(S_1,S_2) \le 4$.
%
%
%


\medskip

\noindent (iii) $G = \mathrm{L}_n(q)$, with $n$ an odd prime, $q$ a prime power, and $G \not\cong \mathrm{L}_3(4)$. Similarly to the previous case, the group $G$ and its overgroup $R:=\mathrm{PGL}_n(q)$ act transitively on the set $\Omega$ of one-dimensional subspaces of the vector space $\mathbb{F}_{\!q}^n$. Here, $M_1 = {G \cap N_R(K)}$, where $K$ is a Singer subgroup of $R$, i.e., a cyclic subgroup of order $(q^n-1)/(q-1)$ (see \cite[\S1--2]{hestenes}).

Now, $M_1$ contains a non-identity element $m$ that fixes a subspace $X \in \Omega$ \cite[p.~497]{hestenes}. Observe that ${m^k \in M_1}$ for each $k \in K$. The action of $K$ on $\Omega$ is transitive, and hence each subspace in $\Omega$ is fixed by some non-identity element of $M_1$. Therefore, if a non-identity element of $S_2$ fixes a subspace $U \in \Omega$, then $S_1 \sim M_1 \sim G_U \sim S_2$ and $d(S_1,S_2) \le 3$. Otherwise, since $n$ is prime, there exists $g \in G$ such that $S_2 \cap M_1^g \ne 1$. Thus ${S_1 \sim M_1 \sim G_X \sim M_1^g \sim S_2}$ and $d(S_1,S_2) \le 4$.

\medskip

\noindent (iv) $G = \mathrm{U}_n(q)$, with $n$ an odd prime, $q$ a prime power, and $G \not\cong \mathrm{U}_3(3)$, $\mathrm{U}_3(5)$ or $\mathrm{U}_5(2)$. Here, $G$ acts intransitively on the set of one-dimensional subspaces of the vector space $\mathbb{F}_{\!q^2}^n$. Let $(q+1,n)$ denote the greatest common divisor of $q+1$ and $n$. The maximal subgroup $M_1$ is equal to $N_G(T)$, where $T$ is a Singer subgroup of $G$, i.e., a cyclic subgroup of order $\frac{q^n+1}{(q+1)(q+1,n)}$ (see \cite[\S5]{hestenes}). In fact, each maximal subgroup of $G$ of odd order is conjugate to $M_1$. Similarly to the linear case, $M_1$ contains a non-identity element that fixes a one-dimensional subspace $X$ of $\mathbb{F}_{\!q^2}^n$ \cite[p.~512]{hestenes}.

Let $L:=G_X$. Then $M_1 \sim L$, and we can calculate $|L|$ using \cite[Table 2.3]{bhrd}. In particular, $|L|$ is even. Hence if $|M_2|$ is even, then $G$ contains a dihedral subgroup $D$ such that ${S_1 \sim M_1 \sim L \sim D \sim M_2 \sim S_2}$, and $d(S_1,S_2) \le 5$. If $|M_2|$ is odd, then there exists an element $g \in G$ such that $M_2 = M_1^g$. Thus $L^g \sim M_2$. If $n = 3$ and $X$ is non-degenerate, then it follows from Proposition \ref{prop:interst} that $L \sim L^g$. Therefore, ${S_1 \sim M_1 \sim L \sim L^g \sim M_2 \sim S_2}$ and $d(S_1,S_2) \le 5$. In the remaining cases, we will show that $|L|^2/|G| > 1$, and hence $|L|\,|L^g| > |G|$. It will follow that $L \cap L^g \ne 1$, again yielding $d(S_1,S_2) \le 5$.

Observe that $|L|^2/|G| > 1$ if and only if $\log |G|/\log |G:L| > 2$. By \cite[Proposition 3.2]{halasi}, if $n \ge 7$, then $\log |G|/\log |G:L| > 2$, as required. If instead $n = 3$, then we may assume that $X$ is totally singular. Here, $q > 2$, and hence $$|L|^2/|G| = \frac{q^3(q^2-1)}{(q^3+1)(q+1,3)} \ge \frac{q^3(q-1)}{(q^3+1)} > 1.$$ Suppose finally that $n = 5$. If $X$ is totally singular, then $|L|^2/|G|$ is equal to $$\frac{q^{10}(q^2-1)^3(q^3+1)}{(q^4-1)(q^5+1)(q+1,5)} > \frac{q^{10}}{(q^4-1)(q^5+1)} = \frac{q^{10}}{q^9-q^5+q^4-1} > 1.$$ If instead $X$ is non-degenerate, then $$|L|^2/|G| = \frac{q^2(q+1)\prod_{i=1}^{4}(q^i-(-1)^i)}{(q^5+1)(q+1,5)} > \frac{q^2(q^{4}-1)}{q^5+1} = \frac{q^6-q^2}{q^5+1} > 1.$$

\medskip

\noindent (v) $G = \mathrm{M}_{23}$. In this case, $M_1$ has shape $23 \sd 11$. We argue as in the proof of \cite[Assertion I]{shen}. There exists a maximal subgroup $L$ of $G$ isomorphic to $\mathrm{M}_{22}$, and $|M_1|\,|L|$ and $|M_2|\,|L|$ are greater than $|G|$ (for any choice of $M_2$). It follows that $S_1 \sim M_1 \sim L \sim M_2 \sim S_2$, and so $d(S_1,S_2) \le 4$.

\medskip

\noindent (vi) $G = \mathrm{Th}$. Here, $M_1$ has shape $31 \sd 15$. If the proper nontrivial subgroup $S_1$ of $M_1$ has order $31$, then $S_1$ lies in a maximal subgroup of shape $\nonsplit{2^5}{\mathrm{L}_5(2)}$. Otherwise, $|C_G(S_1)|$ is even. Therefore, in each case, $S_1$ lies in a maximal subgroup of even order. The same is true for $S_2$, and thus $d(S_1,S_2) \le 4$.

\medskip

\noindent (vii) $G = \mathbb{B}$. In this case, $M_1$ has shape $47 \sd 23$. Additionally, $G$ has a maximal subgroup $K \cong \mathrm{Fi}_{23}$, which has even order, and $M_1 \sim K$. Hence if $|M_2|$ is even, then $S_1 \sim M_1 \sim K \sim D \sim M_2 \sim S_2$ for some dihedral subgroup $D$ of $G$, yielding $d(S_1,S_2) \le 5$. Otherwise, there exists an element $g \in G$ such that $M_2 = M_1^g$, and hence $K^g \sim M_2$. As $|K|^2/|G| > 1$, we conclude that ${S_1 \sim M_1 \sim K \sim K^g \sim M_2 \sim S_2}$ and $d(S_1,S_2) \le 5$. Thus $\diam(\ig) \le 5$.

We now show that $\diam(\ig)$ is equal to $5$. Let $H$ be a subgroup of $M_1$ of order $23$. Then $H$ is a Sylow subgroup of $G$. It follows from \cite[p.~67]{wilsonsporadic} that each maximal subgroup of $G$ that contains $H$ is conjugate either to $M_1$, to $K$, or to a subgroup $L$ of shape $\nonsplit{2^{1+22}}{\mathrm{Co}_2}$. We may assume that $H \le M_1 \cap K \cap L$. Additionally, $N_G(H)$ has shape $(23 \sd 11) \times 2$ and $N_L(H) = N_G(H)$, while ${|N_G(H):N_{M_1}(H)|} = 22$. Since the $22$ non-identity elements of $H$ fall into two $K$-conjugacy classes and $C_K(H) = H$, we conclude that $N_K(H)$ has shape $23 \sd 11$, and so ${|N_G(H):N_K(H)|} = 2$.

Consider the pairs $(H',M')$, where $H'$ is a $G$-conjugate of $H$, $M'$ is a $G$-conjugate of $M_1$, and $H' \le M'$. As any two $G$-conjugates of $H$ appear in an equal number of such pairs, we deduce that $H$ lies in exactly ${|N_G(H):N_{M_1}(H)|} = 22$ $G$-conjugates of $M_1$. Similarly, $H$ lies in two $G$-conjugates of $K$ and one $G$-conjugate of $L$. 


As $M_1$ has shape $47 \sd 23$, it contains a subgroup $S$ of order $47$. In fact, $M_1$ is the unique maximal subgroup of $G$ that contains $S$. Hence if $J$ is a maximal subgroup of $G$ satisfying $J \ne M_1$ and $J \cap M_1 \ne 1$, then $J$ contains a $G$-conjugate of $H$. Let $\mathcal{U}$ be the set of $G$-conjugates of $H$ that lie in at least one such maximal subgroup $J$, or in $M_1$. There are $47$ subgroups of order $23$ in $M_1$, each of which lies in two $G$-conjugates of $K$, and there are $|K:N_K(H)|$ subgroups of order $23$ in $K$. Therefore, there are fewer than $47 \cdot 2|K:N_K(H)|$ subgroups in $\mathcal{U}$ that lie in at least one $G$-conjugate of $K$. By considering the $G$-conjugates of $M_1$ and $L$ similarly, we conclude that $$|\mathcal{U}| < 47({2|K:N_K(H)|}+ 22\cdot 47 + |L:N_L(H)|) < |G:M_1|/22.$$ Hence there exists $g \in G$ such that no subgroup of $M_1^g$ lies in $\mathcal{U}$. This means that $M_1$ and $M_1^g$ are not adjacent in $\ig$ and have no common neighbours, and so $d(M_1,M_1^g) > 2$. As $M_1$ and $M_1^g$ are the unique neighbours of $S$ and $S^g$, respectively, it follows that $d(S,S^g) > 4$. Therefore, $\diam(\ig) = 5$.

\medskip

\noindent (viii) $G = \mathbb{M}$. Liebeck and Saxl list two possible maximal subgroups of odd order (up to conjugacy), of shape $59 \sd 29$ and $71 \sd 35$, respectively. However, these subgroups are not, in fact, maximal: the former lies in the maximal subgroup $\mathrm{L}_2(59)$ constructed in \cite{holmes59}, and the latter lies in the maximal subgroup $\mathrm{L}_2(71)$ constructed in \cite{holmes71}. Hence $G$ has no maximal subgroup of odd order, and so $\diam(\ig) \le 4$.
\end{proof}

\subsection*{Acknowledgment}
The author is grateful to Colva Roney-Dougal and Peter Cameron for proofreading this paper and providing helpful feedback, and to Peter for the linear group arguments used in the proof of Theorem \ref{thm:diamthm}.

\bibliographystyle{plain}
\bibliography{Intrefs}

\end{document}